\newcommand{\R}{\mathbb R}
\newcommand{\N}{\mathbb N}
\newcommand{\E}{\mathbb E}
\renewcommand{\phi}{\varphi}
\newcommand{\pee}{\ensuremath{\mathbb{P}}}
\newcommand{\V}{\mathbb V}
\def\1{{\mathchoice {\rm 1\mskip-4mu l} {\rm 1\mskip-4mu l}
{\rm 1\mskip-4.5mu l} {\rm 1\mskip-5mu l}}}
\newtheorem{theorem}{{\small T}{\scriptsize HEOREM}}[section]
\newtheorem{corollary}{{\bf{\small C}{\scriptsize OROLLARY}}}[section]
\newtheorem{proposition}{{\bf{\small P}{\scriptsize ROPOSITION}}}[section]
\newtheorem{lemma}{{\bf{\small L}{\scriptsize EMMA}}}[section]
\newtheorem{remark}{{\bf{\small R}{\scriptsize EMARK}}}[section]
\newtheorem{definition}{{\bf{\small D}{\scriptsize EFINITION}}}[section]
\renewenvironment{proof}[1]
{\noindent{{\bf{\small{ P}{\scriptsize ROOF}}}.}\hspace{0.1cm} #1} {$\;\qed$\newline}
\newcommand{\beq}{\begin{eqnarray}}
\newcommand{\eeq}{\end{eqnarray}}
\newcommand{\ba}{\begin{align*}}
\newcommand{\ea}{\end{align*}}
\newcommand{\be}{\begin{equation}}
\newcommand{\ee}{\end{equation}}
\newcommand{\bl}{\begin{lemma}}
\newcommand{\el}{\end{lemma}}
\newcommand{\br}{\begin{remark}}
\newcommand{\er}{\end{remark}}
\newcommand{\bt}{\begin{theorem}}
\newcommand{\et}{\end{theorem}}
\newcommand{\bd}{\begin{definition}}
\newcommand{\ed}{\end{definition}}
\newcommand{\bp}{\begin{proposition}}
\newcommand{\ep}{\end{proposition}}
\newcommand{\bc}{\begin{corollary}}
\newcommand{\ec}{\end{corollary}}
\newcommand{\bpr}{\begin{proof}}
\newcommand{\epr}{\end{proof}}
\newcommand{\bi}{\begin{itemize}}
\newcommand{\ei}{\end{itemize}}
\newcommand{\ben}{\begin{enumerate}}
\newcommand{\een}{\end{enumerate}}
\newcommand{\caA}{{\mathcal A}}
\newcommand{\caE}{{\mathrsfs E}}
\newcommand{\caJ}{{\mathcal J}}
\newcommand{\caK}{{\mathcal K}}
\newcommand{\caS}{{\mathcal S}}
\newcommand{\aaaa}{\mathbb{A}}
\newcommand{\tee}{\mathbb{T}}
\begin{document}
\title{Generalized immediate exchange models and their symmetries}
\author{
Frank Redig
and Federico Sau
\\
\small{Delft Institute of Applied Mathematics}\\
\small{Delft University of Technology}\\
{\small Mekelweg 4, 2628 CD Delft}
\\
\small{The Netherlands}
}
\maketitle

\pagenumbering{arabic}

\begin{abstract}
We reconsider the immediate exchange model and define
a more general class of models where mass is split, exchanged and
merged. We relate the splitting process to the symmetric inclusion process via thermalization and
from that obtain symmetries and self-duality of the generalized $IEM$ model.
We show that analogous properties hold for models were
the splitting is related to the symmetric exclusion process
or to independent random walkers.
\end{abstract}
\section{Introduction}

The immediate exchange model is a model of wealth distribution, introduced in \cite{Heipat},  further
studied in \cite{pipo}, generalized and studied from the viewpoint of processes with duality in \cite{grs}.

In words, it is a model in which two agents at random exponential event times each split their wealth (a non-negative real quantity) into two parts, uniformly,  then exchange the ``top parts''
and add the two parts again to obtain their updated wealth.
The model conserves the total wealth and is reminiscent of models of statistical mechanics such as the $KMP$ model
and its generalizations \cite{kmp}, \cite{cggr}. Moreover, it has reversible product measures of type $\Gamma(2)$.\par
We showed in \cite{grs} that the splitting can be done according to a $B(s,t)$ distribution, and then
the model has reversible product measures of type $\Gamma(s+t)$. This was established using a duality with
a discrete model of the same type, where discrete mass is redistributed in an analogous way, and where
there the splitting part is using a Beta Binomial distribution. \newline
We proved that this discrete model is
self-dual, has reversible product measures which are discrete $\Gamma(s+t)$ distributions.
As a consequence, by considering a many-particle limit of this discrete model, one recovers the original continuous model, as well as the duality between these two models. \par
In this paper, we first give a new perspective on the discrete immediate exchange model, by viewing the \emph{splitting
part} of the dynamics as a thermalization of the symmetric inclusion process ($SIP$).
This immediately leads to symmetries of the splitting part. We then show that these symmetries are
permutation invariant, and therefore also commute with the \emph{exchange part} of the dynamics.
Remarkably, the symmetries also survive the \emph{addition part}, because the symmetries of the $SIP$
have a natural additive structure in the parameter labeling the representation.\newline
This recovers in a much more elegant way the full $SU(1,1)$ symmetry of the $IEM(1,1)$ model, where
the parameter of the discrete representation is $\kappa=2=1+1$,  arising as the addition of the parameters of the representations of the two underlying $SIP(1)$ processes, where $\kappa=1$.
This can then be immediately generalized to the $IEM(s_1, t_1; s_2, t_2)$ model and opens many possibilities of
further generalizations to other splitting mechanisms, based on different thermalizations (e.g. $SEP$ instead
of $SIP$ corresponding to ``maximal wealth'' restrictions).\par
In the  paper we restrict to models with two agents. However, all the results (symmetries and self-dualities) straightforwardly generalize to
a many-agent model, where each agent is associated to a vertex of an (undirected and simple) graph $G= (V,E)$   and,  independently and at exponential times, the two agents/nodes associated to each edge are updated according to the two-agent redistribution
rule. The product form of the (self-)duality functions allows these extensions.

This self-duality property is of great use if one wants to analyze the multi-agent model because the time dependent expectation of a multivariate polynomial of degree $k$ in the wealth of the different agents
will
be linked to the evolution of the total wealth of at most $k$ ``dual units'' - which is, of course, much simpler:
e.g., the expected wealth of one agent can simply be understood from the initial condition and a single continuous-time random walk.
Also, self-duality allows a quite complete characterization of the invariant measures of infinite systems (e.g.\ the continuous $IEM$ model), using properties - so-called
existence of a successful coupling - of the finite system (e.g.\ the discrete $IEM$ model) only.
Finally, taking a scaling limit (where the wealth of agent $i$
scales as $\lfloor Nx_i\rfloor$, with $N\to\infty$), one recovers from self-duality the duality between the discrete
and the continuous immediate exchange models.

The rest of our paper is organized as follows. In Section 2 we start with giving a new perspective on the $IEM$, by viewing its dynamics
as a composition of splitting, exchange and addition.
In Section 2.1 we study reversible measures and show how to recover the reversible measure of the $IEM$ from the reversible measure of the splitting part of the dynamics,
which is well-known because of its connection to the $SIP$. In Sections 2.2-2.3 we study the $SU(1,1)$-based symmetries of the splitting part and how to recover from them, by exchange and lumping,
the full $SU(1,1)$ symmetry of the $IEM$. As a consequence, in Theorem 2.1, we obtain self-duality of the $IEM$ from these symmetries. In Section 3 we study a generalized version of the $IEM$, and
also two new versions where the splitting part is based on thermalization of the $SEP$ and independent both symmetric and asymmetric random walkers, respectively. These new processes are studied along the same general scheme, but for different underlying algebras. 

\section{A new perspective on the immediate exchange model}
We start by reconsidering the dual immediate exchange model, introduced in \cite{grs}, which
is also a natural discrete analogue of the original model.
Here we have two agents with initial wealths $n_1,n_2\in \N$, where $\N=\{0,1,2,\ldots\}$ denotes
the set of non-negative integers.
In what follows we will denote by $\V_m$ the vector space of functions $f: \N^m\to\R$.

Our model is then described as follows: at the event times of a mean-one Poisson
process (or alternatively, at discrete times), the wealth is updated according to the following split, exchange and addition mechanism.
\ben [label={(\Roman*)}]
\item {\bf Splitting.} In the first step, the wealth of both agents is split according to
$n_1\mapsto (k_1, n_1-k_1)$ and
$n_2\mapsto (k_2, n_2-k_2)$, where $k_1$ (resp.\ $k_2$) are independent discrete uniform on
$\{0,\ldots, n_1\}$ (resp.\ $\{0,\ldots, n_2\}$). After this splitting we call
$k_1$ (resp.\ $k_2)$ the ``top part'' of the wealth of agent 1 (resp.\ agent 2); the remaining ones, i.e.\ $n_1-k_1$ and $n_2-k_2$, are called the ``bottom'' parts.\\
Let us denote by $X^{1,1}_{n_1,n_2}$ the $\N^4$-valued random variable with
distribution $(k_1, n_1-k_1; k_2, n_2-k_2)$ just described. Note that here the upper index $1, 1$ refers
to the choice of discrete uniforms for both $k_1$ and $k_2$.
This will be generalized later, where the distribution of $k_1$ can be chosen to be
Beta Binomial with parameters $n_1, s_1, t_1$ ($s_1=t_1=1$ corresponds to
the present uniform choice).\\
The splitting part of the dynamics can then simply be seen as the update from $(n_1,n_2)$ to
the four component random variable $X^{1,1}_{n_1,n_2}$.
\item {\bf Exchange.} In the second step, the top parts of both agents are exchanged,
i.e. $(k_1, n_1-k_1; k_2, n_2-k_2)$ goes to $(k_2, n_1-k_1; k_1, n_2-k_2)$.
This corresponds to the action of the so-called \emph{exchange map}
\[
\caE: \N^4\to\N^4: (n_{1,1}, n_{1,2}; n_{2,1}, n_{2,2})\mapsto (n_{2,1}, n_{1,2}; n_{1,1}, n_{2,2}),
\]
to which is associated a corresponding operator on functions $f\in \V_4$,
\[
\caE (f):= f\circ \caE.
\]
\item {\bf Addition.} At last, both parts of the wealth of each agent are added again, i.e.\ the final new wealths of both agents are
$$(k_2+n_1-k_1, k_1+n_2-k_2).$$
This corresponds to the surjective map
\[
\phi: \N^4\to \N^2: (n_{1,1}, n_{1,2}; n_{2,1}, n_{2,2})\mapsto (n_{1,1}+ n_{1,2}; n_{2,1}+ n_{2,2})
\]
and its corresponding operator $T_\phi: \V_2\to \V_4$, mapping functions from two variables to functions of four variables
via
\be\label{eee}
T_\phi f:= f\circ \phi
\ee
or, more explicitly, for $f:\N^2\to\R$, $T_\phi f: \N^4\to \R$ is defined via
\[
T_\phi f(n_{1,1}, n_{1,2}; n_{2,1}, n_{2,2})= f(n_{1,1}+ n_{1,2}, n_{2,1}+ n_{2,2})
\]
We note that if a function $g\in \V_4$ of four variables is in the image of $T_\phi$, i.e.\ it
is of the form $T_\phi f$ with
$f\in \V_2$, then on that function we can of course define $T_\phi^{-1}$ via $T_\phi^{-1} g:= f$ with $T_\varphi^{-1} T_\varphi= \1_{\V_2}$, the identity on $\V_2$.
The extension of $T_\phi^{-1}$ to the whole $\V_4$ is not unique, and we will later on make a particular choice in definition \ref{tphiminusone} below, which has
a natural connection with the redistribution of mass operator.
\een

With the notation introduced so far, we can describe one update in the $IEM(1,1)$ model as replacing
the initial wealth distribution  of the two agents (concentrated on $(n_1, n_2) \in \N^2$) by $\phi(\caE (X^{1,1}_{n_1,n_2}))$.
We can then write the generator of the immediate exchange model (abbreviation $IEM(1,1)$) as follows
\be\label{bombi}
L  = \Pi - \1_{\V_2},
\ee
where $\Pi$ is the transition operator on $\V_2$ described by
\be\label{piop}
\Pi f(n_1,n_2):= \E f(\phi(\caE (X^{1,1}_{n_1,n_2}))).
\ee

Furthermore, we introduce the so-called
  \emph{redistribution of mass operator} acting  on functions $f\in \V_4$,
\be\label{peop}
P f(n_{1,1},n_{1,2};n_{2,1},n_{2,2})=\sum_{k_1=0}^{n_1}\sum_{k_2=0}^{n_2}\frac1{n_1+1}\frac1{n_2+1} f(k_1,n_{1}-k_1;k_2,n_{2}-k_2),
\ee
with $n_1= n_{1,1}+n_{1,2}$, $n_2= n_{2,1}+n_{2,2}$.
Notice that $P: \V_4 \to \V_4$ maps a function $T_\varphi g$ with $g \in \V_2$ onto a function
of the form $T_\phi h$, for some $h\in \V_2$. Indeed, it is clear that the rhs of \eqref{peop} only depends
on $\phi(n_{1,1}, n_{1,2}; n_{2,1}, n_{2,2})= (n_1, n_2)$, provided $f(k_1, n_1-k_1; k_2, n_2-k_2)= g(n_1, n_2)$ for some $g \in \V_2$.
Moreover, via the  operators $P$, $\caE$ and $T_\varphi$,    an equivalent form for the transition operator $\Pi$ in \eqref{piop} is deduced:
\be\label{piop1}
\Pi f= T_\phi^{-1}(P (f\circ \phi\circ \caE))=T_\phi^{-1}P\caE T_\phi f.
\ee

In what follows, we will see that we can view $P$ as a thermalization of two $SIP(1)$ processes and,
as a consequence, the operator $\Pi$ will have symmetries (=commuting operators) arising from the ``addition'' (or ``lumping'', cf. section \ref{section symmetries})
of the symmetries of these two $SIP(1)$ generators, which will correspond to the
symmetries of a $SIP(2)$ process.

\subsection{Reversible measures for the process with transition operator $\Pi$}

By following the strategy presented e.g.\ in \cite{cggr2}, symmetries for $\Pi$ generate new self-duality functions  by acting on a cheap self-duality function related to the reversible measure of the process $\Pi$. Therefore, to prove self-duality,  we must look at the same time for both symmetries and reversible measures for $\Pi$.

Regarding the latter problem, one way out is to  solve directly a detailed balance equation (cf.\ \cite{grs}). As a result, the reversible product measures of the $IEM(1,1)$ model  are given by
products of discrete $\Gamma(2)$ (negative binomial with parameters $2,\lambda$) distributions, with marginals
\be\label{bobob}
\nu_\lambda (n)= (1-\lambda)^2\lambda^n (n+1),\ n \in \N,
\ee
where $0<\lambda<1$. If $X$ and $Y$ are i.i.d.\ with distribution \eqref{bobob}, then conditional
on the sum $X+Y=N$, the random variable $X$ is distributed according
to a Beta Binomial distribution with parameters $N, 2,2$.
Therefore, starting the $IEM(1,1)$ from an initial state $(n_1,n_2)$, it will converge to a
Beta Binomial distribution with parameters $N=n_1+n_2, 2,2$. More generally, we call the \emph{discrete $\Gamma(\beta,\lambda)$
distribution} the distribution for which
\be\label{boboba}
\nu^\beta_\lambda (n)= (1-\lambda)^{\beta}\frac{\lambda^n}{n!}\frac{\Gamma(n+\beta)}{\Gamma(\beta)},\ n \in \N,
\ee
with $0 < \lambda < 1$ and $\beta > 0$.
If $X$ and $Y$ are i.i.d.\ with distribution discrete $\Gamma(\beta,\lambda)$, resp.\ $\Gamma(\beta', \lambda)$, then
conditional
on the sum $X+Y=N$, the random variable $X$ is distributed according
to a Beta Binomial distribution with parameters $N, \beta,\beta'$.\\

Another possibility, which exploits the form \eqref{piop1} of $\Pi$, is to obtain reversible measures for $\Pi$ from those of $P$, the redistribution of mass operator, if available.
In order to do so, we
need a particular version of the map $T_\phi^{-1}$.

\begin{definition}\label{tphiminusone}
For any measure $\mu$ on $\N^4$, the operator $T_\varphi^{-1}: \V_4 \to \V_2$ in \eqref{piop1} is said to be \emph{$\mu$-canonical} if for any $f \in \V_4$
\begin{align*}
T_\varphi^{-1} f(n_1, n_2):= \underset{n_{2,1}+n_{2,2}=n_2}{\underset{n_{1,1}+n_{1,2}=n_1}{\sum_{(n_{1,1}, n_{1,2}; n_{2,1}, n_{2,2}) \in \N^4}}} f(n_{1,1}, n_{1,2}; n_{2,1}, n_{2,2}) \cdot \frac{\mu(n_{1,1}, n_{1,2}; n_{2,1}, n_{2,2}) }{\tilde \mu(n_1, n_2)},
\end{align*}
where $\tilde \mu:= \mu \circ \varphi$ is the image measure of $\mu$ under $\varphi$.
\end{definition}

Remark that saying that $T^{-1}_\varphi$ is $\mu$-canonical means
\begin{align*}
T^{-1}_\varphi f (n_1, n_2)= \E_\mu \left[ f(\cdot)\ \big|\ n_{1,1}+n_{1,2}=n_1, n_{2,1}+n_{2,2}=n_2  \right].
\end{align*}
We first prove some elementary properties
of this version of $T_\phi^{-1}$.
\begin{lemma}
Let $T_\varphi: \V_2 \to \V_4$ be the operator in \eqref{piop1} and $T^{-1}_\varphi$ be $\mu$-canonical for some $\mu$. Then we have:
\begin{itemize}
\item[(a)] $T^{-1}_\varphi T_\varphi f (n_1, n_2) = f (n_1, n_2)$, for  $f \in \V_2$ and $(n_1, n_2) \in \N^2$.
\item[(b)] $\int T_\varphi f d\mu = \int f d \tilde \mu$ for $f \in \V_2$.
\item[(c)] $\int T^{-1}_\varphi f d \tilde \mu = \int f d \mu$, for $f \in \V_4$.
\item[(d)] $T^{-1}_\varphi \left(T_\varphi f \cdot g \right)= f \cdot T^{-1}_\varphi g$, for $f \in \V_2$ and $g \in \V_4$.
\end{itemize}
\end{lemma}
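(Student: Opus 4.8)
The plan is to recognize, as the remark following Definition \ref{tphiminusone} already indicates, that the $\mu$-canonical $T_\varphi^{-1}$ is nothing but the conditional expectation $\E_\mu[\,\cdot\mid \varphi=(n_1,n_2)\,]$, so that (a)--(d) are precisely the elementary properties of discrete conditional expectation: the pushforward (image-measure) identity, the law of total expectation, and the ``taking out what is known'' rule. Accordingly, I would prove all four items by direct computation straight from the definition, using only two facts: first, that by definition of the image measure $\tilde\mu(n_1,n_2)=\sum \mu(n_{1,1},n_{1,2};n_{2,1},n_{2,2})$, the sum ranging over the (finite) fiber $\{n_{1,1}+n_{1,2}=n_1,\ n_{2,1}+n_{2,2}=n_2\}$; and second, that $T_\varphi f$ is constant on each such fiber, equal there to $f(n_1,n_2)$.

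For (a), applying $T_\varphi^{-1}$ to $T_\varphi f$ replaces the summand by the fiber-constant value $f(n_1,n_2)$; pulling this constant out of the sum leaves $f(n_1,n_2)\,\tilde\mu(n_1,n_2)^{-1}\sum_{\text{fiber}}\mu$, and the first fact collapses the fraction to $1$. For (b), which is just the pushforward formula, I would write $\int f\,d\tilde\mu$ as a sum over $(n_1,n_2)$ weighted by $\tilde\mu(n_1,n_2)=\sum_{\text{fiber}}\mu$, use $f(n_1,n_2)=T_\varphi f$ on the fiber, and regroup the resulting double sum into a single sum over $\N^4$ against $\mu$, i.e.\ $\int T_\varphi f\,d\mu$.

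For (c), I would substitute the definition of $T_\varphi^{-1}f(n_1,n_2)$ into $\int T_\varphi^{-1}f\,d\tilde\mu=\sum_{(n_1,n_2)}T_\varphi^{-1}f(n_1,n_2)\,\tilde\mu(n_1,n_2)$; the factor $\tilde\mu(n_1,n_2)$ cancels the denominator, and the iterated sum over $(n_1,n_2)$ followed by the fiber sum is exactly the sum over all of $\N^4$, yielding $\int f\,d\mu$. For (d) the key is again fiber-constancy: in $T_\varphi^{-1}(T_\varphi f\cdot g)(n_1,n_2)$ the factor $T_\varphi f(\ldots)$ equals $f(n_1,n_2)$ throughout the fiber, so it factors out of the sum and leaves exactly $f(n_1,n_2)\cdot T_\varphi^{-1}g(n_1,n_2)$.

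None of these steps poses a genuine obstacle, so the main point requiring care is the possible vanishing of $\tilde\mu(n_1,n_2)$, which would make the defining fraction ill-posed. This is harmless: since $\mu\ge 0$, $\tilde\mu(n_1,n_2)=0$ forces $\mu$ to vanish on the whole fiber, so those terms contribute nothing to either side and may be omitted (or $T_\varphi^{-1}f$ set to $0$ there). Summability is likewise not an issue for (a) and (d), where the fibers are finite, while for (b) and (c) one works under the standing assumption that the relevant functions are $\mu$-integrable so that the rearrangement of sums is justified. Thus the real content of the lemma is the conceptual identification of $T_\varphi^{-1}$ with conditional expectation, whereas the verifications themselves are routine bookkeeping.
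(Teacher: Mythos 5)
Your proposal is correct and follows essentially the same route as the paper: both identify the $\mu$-canonical $T_\varphi^{-1}$ with the conditional expectation $\E_\mu[\,\cdot\mid n_{1,1}+n_{1,2}=n_1,\ n_{2,1}+n_{2,2}=n_2]$, dispose of (a) and (b) as immediate fiber computations, prove (c) via the law of total probability using $\tilde\mu(n_1,n_2)=\mu(\varphi^{-1}\{(n_1,n_2)\})$, and prove (d) by pulling the fiber-constant factor $T_\varphi f$ out of the conditional expectation. Your added care about fibers with $\tilde\mu(n_1,n_2)=0$ and about summability is a small refinement the paper leaves implicit, not a different argument.
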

\begin{proof}
Parts (a) and (b) are trivial. For part (c), simply by definition of $\varphi:= \N^4 \to \N^2$ and the law of total probability,
\begin{align*}
&\int T^{-1}_\varphi f d \tilde \mu=\\
&\quad=\sum_{(n_1, n_2) \in \N^2} \E_\mu\left[f\ \big|\ n_{1,1}+n_{1,2}=n_1, n_{2,1}+n_{2,2}=n_2\big| \right] \tilde \mu((n_1, n_2))\\
&\quad=
 \sum_{(n_1, n_2) \in \N^2} \E_\mu\left[f\ \big|\ n_{1,1}+n_{1,2}=n_1, n_{2,1}+n_{2,2}=n_2\big| \right]  \mu(\varphi^{-1}\{(n_1, n_2)\})\\
&\quad= \E_\mu\left[ f \right],
\end{align*}
where we remind that
\[
\varphi^{-1}(n_1, n_2):= \{(n'_{1,1}, n'_{1,2}, n'_{2,1}, n'_{2,2}): n'_{1,1}+n'_{1,2}=n_1, n'_{2,1}+n'_{2,2}=n_2 \} \subset \N^4.
\]
For part (d), for any $f \in \V_2$ and $g \in \V_4$ we have
\begin{align*}
&\left(T^{-1}_\varphi \left(T_\varphi f \cdot g \right) \right)(n_1, n_2)=\\
&\quad= \E_\mu\left[  (T_\varphi f) \cdot g\ \big|\ n_{1,1}+n_{1,2}=n_1, n_{2,1}+n_{2,2}=n_2 \right]\\
&\quad= f(n_1, n_2) \cdot\E_\mu\left[ g\ \big|\ n_{1,1}+n_{1,2}=n_1, n_{2,1}+n_{2,2}=n_2 \right]\\
&\quad= f(n_1, n_2) \cdot T^{-1}_\varphi g (n_1, n_2).
\end{align*}
\end{proof}

We discuss below the explicit condition to recover reversibility of the process $\Pi$ in terms of the reversible measure for $P$.

\begin{proposition}\label{proposition reversible}
Let $\mu$ be an invariant measure on $\N^4$ under the exchange map $\caE$, reversible for the process $P$, and assume moreover that
\begin{equation}\label{heart}
\Pi:= T^{-1}_\varphi P \caE T_\varphi = T^{-1}_\varphi \caE P T_\varphi,
\end{equation}
with $T^{-1}_\varphi$ being $\mu$-canonical.  Then $\tilde \mu:= \mu \circ \varphi$ is reversible for $\Pi$, i.e.
\begin{equation}\label{adj}
\Pi^\ast = \Pi,
\end{equation}
where $\Pi^\ast$ is the adjoint operator of $P$ in $L^2(\tilde{\mu})$.
\end{proposition}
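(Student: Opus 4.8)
The plan is to compute the adjoint $\Pi^\ast$ of $\Pi = T^{-1}_\varphi P \caE T_\varphi$ in $L^2(\tilde\mu)$ explicitly and to recognize that it coincides with the second expression for $\Pi$ appearing in the hypothesis \eqref{heart}. Two auxiliary self-adjointness facts carry most of the work, and I would record both at the outset. First, reversibility of $\mu$ for $P$ means exactly that $P$ is self-adjoint in $L^2(\mu)$, i.e. $\int h \cdot P k \, d\mu = \int k \cdot P h \, d\mu$ for all $h, k \in \V_4$. Second, since $\caE$ is an involution (exchanging the top parts twice is the identity) and $\mu$ is $\caE$-invariant, the change of variables $y = \caE x$ gives $\int h \cdot \caE k \, d\mu = \int \caE h \cdot k \, d\mu$, so the operator $\caE$ is likewise self-adjoint in $L^2(\mu)$.

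The core is then a transfer-and-swap computation. Starting from $\int g \cdot \Pi f \, d\tilde\mu$ with $f, g \in \V_2$ and $\Pi f = T^{-1}_\varphi(P\caE T_\varphi f)$, I would first use part (d) of the preceding lemma to write $g \cdot T^{-1}_\varphi(\,\cdot\,) = T^{-1}_\varphi(T_\varphi g \cdot (\,\cdot\,))$, and then part (c) to pass from the $\tilde\mu$-integral to a $\mu$-integral, obtaining $\int T_\varphi g \cdot P \caE T_\varphi f \, d\mu$. Now I apply the two self-adjointness facts in turn: self-adjointness of $P$ moves it onto $T_\varphi g$, and self-adjointness of $\caE$ moves it off $T_\varphi f$, yielding $\int T_\varphi f \cdot \caE P T_\varphi g \, d\mu$. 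Reversing the previous two steps — part (c) to return to a $\tilde\mu$-integral and part (d) to strip off $T_\varphi f$ — this equals $\int f \cdot T^{-1}_\varphi(\caE P T_\varphi g) \, d\tilde\mu$. Hence $\Pi^\ast = T^{-1}_\varphi \caE P T_\varphi$, and \eqref{heart} gives $\Pi^\ast = \Pi$, which is \eqref{adj}.

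The one genuinely conceptual point — and the reason the hypothesis \eqref{heart} is stated as an equality of two expressions — is that the adjoint operation reverses the order of the composition $P\caE$ into $\caE P$ (conjugated by $T_\varphi$ and $T^{-1}_\varphi$), so the computation produces $\Pi^\ast = T^{-1}_\varphi \caE P T_\varphi$ unconditionally; reversibility is therefore \emph{equivalent} to the statement that this reversed form agrees with $\Pi$ itself. Everything else is bookkeeping, and the only points requiring care are that the self-adjointness of $\caE$ genuinely uses both that $\caE$ is an involution and that $\mu$ is $\caE$-invariant, and that the $\mu$-canonical choice of $T^{-1}_\varphi$ is precisely what makes the transfer identities (c) and (d) available, so that $\tilde\mu = \mu \circ \varphi$ — and not some other measure — is the one for which $\Pi$ is reversible.
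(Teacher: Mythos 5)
Your proof is correct and follows essentially the same route as the paper's: transfer the $L^2(\tilde\mu)$ pairing to $L^2(\mu)$ via parts (c)--(d) of the lemma on the $\mu$-canonical $T^{-1}_\varphi$, apply self-adjointness of $P$ (reversibility) and of $\caE$ (involutivity plus $\caE$-invariance of $\mu$) in $L^2(\mu)$, transfer back, and invoke \eqref{heart}. Your closing observation that the computation gives $\Pi^\ast = T^{-1}_\varphi \caE P T_\varphi$ without using \eqref{heart}, so that \eqref{heart} is in fact \emph{equivalent} to reversibility under the other hypotheses, is a correct minor sharpening that the paper leaves implicit.
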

\begin{proof}
First note that for all $f, g \in \V_4$,
\begin{equation}\label{invariantE}
\int f (\caE g) d \mu = \int (\caE f) g d \mu,
\end{equation}
by invariance of $\mu$ under $\caE$ and since $\caE^{-1}= \caE$.
Next, by reversibility of $\mu$, $P^\ast= P$, where $P^\ast$ is the adjoint in $L^2(\mu)$.

Therefore, for any $f, g \in \V_2$ we proceed as follows
\begin{align*}
&\int (\Pi f) g d \tilde \mu
= \int (T^{-1}_\varphi P \caE T_\varphi f) g d \tilde \mu\\
&\quad\overset{(a)}{=} \int (T^{-1}_\varphi P \caE T_\varphi f) (T^{-1}_\varphi T_\varphi g) d \tilde \mu
\overset{(d)}{=} \int T^{-1}_\varphi [(P \caE T_\varphi f)  (T_\varphi g)] d \tilde \mu\\
&\quad\overset{(c)}{=} \int (P\caE T_\varphi f) ( T_\varphi g) d \mu
\overset{\eqref{invariantE}}{=} \int (T_\varphi f) ( \caE P^\ast T_\varphi g) d \mu\\
&\quad = \int (T_\varphi f) ( \caE P T_\varphi g) d\mu
\overset{(c)}{=} \int T^{-1}_\varphi [(T_\varphi f ) (\caE P T_\varphi g)] d \tilde \mu\\
&\quad \overset{(c)}{=} \int (T^{-1}_\varphi T_\varphi f)(  T^{-1}_\varphi \caE P T_\varphi g) d \tilde \mu
\overset{(a)}{=} \int  f  (T^{-1}_\varphi \caE P T_\varphi g) d \tilde \mu\\
&\quad \overset{\eqref{heart}}{=} \int f  (T^{-1}_\varphi P \caE T_\varphi g) d \tilde \mu 
= \int f (\Pi g) d \tilde \mu,
\end{align*}
which concludes the proof.
\end{proof}

We conclude this section by providing a useful criterion for condition \eqref{heart} to hold. This criterion is the key to obtain reversible measures for any generalized immediate exchange model presented in this paper.

 We remark that
\begin{equation} \label{identity}
T_\varphi^{-1} T_\varphi f = f, \quad f \in \V_2,
\end{equation}
while in general $T_\varphi T_\varphi^{-1} g \neq g$ for some $g \in \V_4$.
\begin{proposition}\label{proposition 2}
If the redistribution operator $P$ is such that
\begin{equation} \label{aaa}
P = T_\varphi T^{-1}_\varphi,
\end{equation}
then condition \eqref{heart} holds.
\end{proposition}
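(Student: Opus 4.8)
The plan is to prove the operator identity \eqref{heart} by substituting the hypothesis \eqref{aaa}, $P = T_\varphi T^{-1}_\varphi$, into each of its two sides and then simplifying using the single cancellation relation at our disposal, namely $T^{-1}_\varphi T_\varphi = \1_{\V_2}$ recorded in part (a) of the Lemma (equivalently \eqref{identity}). The whole argument is purely algebraic; the only point that requires discipline is that we must never treat the \emph{reverse} composition $T_\varphi T^{-1}_\varphi$ as an identity, since that fails on $\V_4$ in general (this is exactly the remark preceding the proposition). So throughout I would cancel only adjacent pairs of the form $T^{-1}_\varphi T_\varphi$, and in each case check that the element on which such a pair acts already lies in $\V_2$, where the cancellation is legitimate.

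First I would rewrite the left-hand side. Substituting $P = T_\varphi T^{-1}_\varphi$ gives
\[
T^{-1}_\varphi P \caE T_\varphi = T^{-1}_\varphi T_\varphi\, T^{-1}_\varphi \caE T_\varphi .
\]
Since $T^{-1}_\varphi \caE T_\varphi$ already maps $\V_2$ into $\V_2$, the leftmost pair $T^{-1}_\varphi T_\varphi$ is being applied to an element of $\V_2$ and is the identity there, so the left-hand side collapses to $T^{-1}_\varphi \caE T_\varphi$.

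Next I would carry out the analogous computation on the right-hand side. Substituting $P = T_\varphi T^{-1}_\varphi$ gives
\[
T^{-1}_\varphi \caE P T_\varphi = T^{-1}_\varphi \caE T_\varphi\, T^{-1}_\varphi T_\varphi ,
\]
and now it is the \emph{rightmost} pair $T^{-1}_\varphi T_\varphi$ that acts directly on the argument in $\V_2$; it is again the identity on $\V_2$, so the right-hand side also collapses to $T^{-1}_\varphi \caE T_\varphi$. Comparing the two reductions yields precisely \eqref{heart}.

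The step I expect to be the only genuine obstacle — really just a bookkeeping matter — is verifying that in each display the surviving cancellation is of the admissible type $T^{-1}_\varphi T_\varphi = \1_{\V_2}$ and is applied to an argument already in the domain $\V_2$. The non-invertibility of $T_\varphi$ means no other simplification would be valid, but fortunately in both cases the extra $T_\varphi$ introduced by $P$ sits in exactly the position where it meets a $T^{-1}_\varphi$ acting on $\V_2$, so the two sides reduce to the same operator.
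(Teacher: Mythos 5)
Your proposal is correct and follows essentially the same route as the paper: there, too, one substitutes \eqref{aaa} into both sides of \eqref{heart} and cancels only adjacent pairs $T^{-1}_\varphi T_\varphi$ acting on elements of $\V_2$ via \eqref{identity}, so that both sides collapse to the common operator $T^{-1}_\varphi \caE T_\varphi$ (the paper writes this as one chain of equalities rather than two separate reductions, but the cancellations are the same). Your explicit care never to cancel the reverse composition $T_\varphi T^{-1}_\varphi$ is exactly the point the paper flags in the remark preceding the proposition.
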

\begin{proof}
The proof is straightforward by using  \eqref{aaa} and \eqref{identity},
\begin{align*}
T^{-1}_\varphi P \caE T_\varphi \overset{\eqref{aaa}}{=} T^{-1}_\varphi T_\varphi T^{-1}_\varphi \caE T_\varphi \overset{\eqref{identity}}{=} T^{-1}_\varphi \caE T_\varphi \overset{\eqref{identity}}{=} T^{-1}\caE T_\varphi T^{-1}_\varphi T_\varphi \overset{\eqref{aaa}}{=} T^{-1}_\varphi \caE P T_\varphi.
\end{align*}
\end{proof}

\subsection{Symmetries of the splitting part and connection with $SIP(1)$}
In order to find relevant symmetries of $\Pi$ in \eqref{piop}, it is now useful to understand
the connection between the redistribution of mass operator $P$ and the symmetric inclusion process, via
thermalization. See \cite{cggr} and \cite{gkrv} for more details on the notion of ``thermalization'' in the context
of models of heat conduction. The idea is to view the splitting of the wealth of an agent as running a $SIP(1)$ process for infinite time
(thermalization of $SIP(1)$) as we will now explain.

The $SIP(1)$ process on two sites is the process that makes jumps from state $(n,m)$ towards
$(n-1,m+1)$ at rate $n(1+m)$ and towards $(n+1,m-1)$ at rate $m(1+n)$; i.e.\ the process on $\N^2$ with generator
\beq\label{gensip2}
L^{SIP(1)} f(n,m)\nonumber
&=& n(1+m) (f(n-1,m+1)-f(n,m))
\nonumber\\
&+& m(1+n) (f(n+1,m-1)-f(n,m))
\eeq

The $SIP(1)$ process, when started from an initial state $(n,m)$, converges in the course of time
to $(k, n+m-k)$, where $k$ is  uniformly distributed on $\{0,\ldots, n+m\}$.
This implies that if we consider the first agent and initially put $n_1$ in its bottom part and
$0$ in its top part, running the $SIP(1)$ from that initial state for infinite time exactly produces  the splitting part for the first agent. By performing this operation for the two agents independently, we can rewrite
\begin{eqnarray}\label{bambi}
&&P f(n_{1,1},n_{1,2};n_{2,1},n_{2,2})\nonumber\\
&=&
\lim_{t\to\infty} \left(\E^{SIP(1)}_{n_{1,1},n_{1,2}}\otimes\E^{SIP(1)}_{n_{2,1},n_{2,2}}\right)
\left(f(n_{1,1}(t),n_{1,2}(t);n_{2,1}(t),n_{2,2}(t))\right).
\end{eqnarray}

We introduce the $K$-operators
\beq\label{bumbi}
K^+ f(n) &=& (1+n) f(n+1)\nonumber\\
K^- f(n) &=& n f(n-1)\nonumber\\
K^0 f(n) &=& \left(\tfrac12 + n \right) f(n)\ ,
\eeq
which form a (left) representation of the $SU(1,1)$ algebra, i.e.\
satisfy the commutation relations
\beq\label{commun}
&&[K^+, K^-]= 2K^0\nonumber\\
&&[K^{\pm}, K^0] = \pm K^{\pm}\ .
\eeq
In this representation, the generator of $SIP(1)$ is given by
\be\label{absip}
L^{SIP(1)}= K_1^+K_2^-+ K_1^-K_2^+ -2K^0_1K^0_2 + \frac12
\ee
where $K^\alpha_i$ denotes $K^\alpha$ working on the $i$-th variable $i\in \{1,2\},\alpha\in \{+,-,0\}$.
The form \eqref{absip}
is called the ``abstract'' form of the $SIP$ generator, and one easily
infers from it
the well known commutation property (see e.g. \cite{gkrv}), namely that it commutes
with $K_1^\alpha + K_2^\alpha$, $\alpha\in \{0,+,-\}$.

As a consequence of \eqref{bambi}, also the redistribution of mass operator $P$ commutes with
\[
\caK^\alpha:=K^\alpha_{1,1}+ K^\alpha_{1,2}+K^\alpha_{2,1}+K^\alpha_{2,2}.
\]

Because this ``symmetry'' of $P$ is the sum of four copies of the same operator, it is clear that
$\caK^\alpha$ is permutation invariant, and hence will also commute with the exchange operator $\caE$.
This is formalized in the following easy lemma.
\bl\label{perlem}
The symmetries $\caK^\alpha$ commute with the exchange operator $\caE$ and, as a consequence,
also with the operator $P\caE$.
\el
\bpr
Without loss of generality, we put ourselves in a two variable context and show the following.

Let $f:\N^2\to\R$ and denote $Ef(n,m):= f(m,n)$. Let $A:\V_1\to\V_1$ be an operator acting on functions of
one integer variable. Then the operator $\mathbb{A}:\V_2\to\V_2$ on functions of two variables, defined via
$\mathbb{A}:=A_1+A_2$, commutes with $E$. Here $A_1$ (resp.\ $A_2$) denotes the action of $A$ on the first
(resp.\ second) variable.
To show this, it suffices to prove that for functions of the form $f(n_1,n_2)= f_1(n_1)f_2(n_2)$ we have
$(\aaaa E-E\aaaa)f=0$. For such functions, $Ef(n_1,n_2)= f_1(n_2) f_2(n_1)$ and
\[
\aaaa f (n_1,n_2)= f_2(n_2)(Af_1)(n_1)+ f_1(n_1)(A f_2)(n_2),
\]
hence
\[
(\aaaa Ef) (n_1,n_2)= f_1(n_2) (A f_2)(n_1)+ f_2(n_1) (Af_1)(n_2) = (E \aaaa f)(n_1, n_2).
\]
\epr

\subsection{Additive structure of symmetries and self-duality}\label{section symmetries}

The representation \eqref{bumbi} of $SU(1,1)$ has a particular parameter which was set equal to one  but that can be assumed to be a general positive constant, giving the
one-parameter family of discrete representations defined by
\beq\label{genbumbi}
K^{+,\kappa} f(n) &=& (\kappa + n) f(n+1)\nonumber\\
K^{-,\kappa} f(n) &=& n f(n-1)\nonumber\\
K^{0,\kappa} f(n) &=& \left(\tfrac{\kappa}{2}+n\right) f(n)\ .
\eeq
These operators satisfy the same $SU(1,1)$ commutation relations for all $\kappa>0$ and
the choice we need to make for the immediate exchange model $IEM(1,1)$ is $\kappa=1$.

The above $K$-operators have a natural additive structure which is expressed in the following lemma, whose proof
is an easy computation left to the reader.
\bl\label{fifi}
Define the \emph{lumping operator} $\caA:\V_1\to\V_2$ from functions of one variable to functions of two variables
via
\[
(\caA f)(n,m)= f(n+m).
\]
We denote its \emph{generalized inverse} $\caA^{-1}: \text{Im}(\caA)\to \V_1$ to be the operator which maps a two-variable function $\tilde f\in \V_2$ in the image
of $\caA$, i.e.\ a function of the form $\tilde f(n,m)=f (n+m)$ with $f \in \V_1$, to the
one-variable function $f \in \V_1$.
Then we have for all $\kappa_1,\kappa_2 >0$, $\alpha\in \{0,+,-\}$, $f:\N\to\R$ and $n_1,n_2\in\N$,
\beq\label{bulu}
((K^{\alpha,\kappa_1}_1+ K^{\alpha, \kappa_2}_2) \caA f)(n_1,n_2)&=& (K^{\alpha,\kappa_1+\kappa_2} f)(n_1+n_2)
\nonumber\\
&=&
(\caA (K^{\alpha,\kappa_1+\kappa_2} f))(n_1,n_2)\ ,
\eeq
which can be rewritten as
\be\label{commukap}
\caA^{-1}(K_1^{\alpha,\kappa_1}+ K_2^{\alpha, \kappa_2}) \caA =  K^{\alpha,\kappa_1+\kappa_2}.
\ee
\el

Now consider an operator $B$ which acts on functions of two variables $\tilde f\in\V_2$ and such that
for functions of the form $\tilde f(n,m)= f (n+m)$, i.e.\ $\tilde f=\caA f$,  it is of the form
$B \tilde f (n,m)= (\tilde B f) (n+m)$, i.e.\ it conserves functions in the image of the lumping operator.
We call such an operator ``lumpable'' and its associated one-variable operator $\tilde{B}:\V_1\to\V_1$ the
``lumped operator of $B$'' (cf. Figure 1).
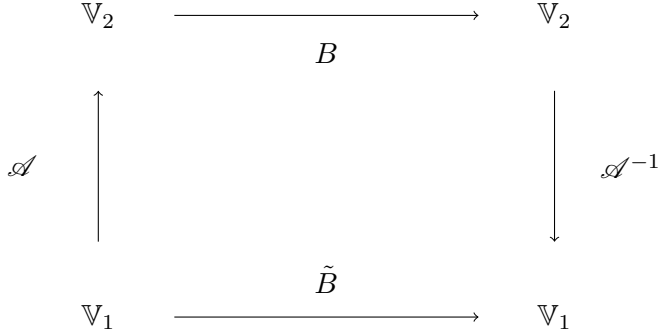
\begin{figure}
\begin{center}
\begin{tikzpicture}
\node at (0,0) {$\V_2$};
\node at (6,0) {$\V_2$};
\node at (0,-4) {$\V_1$};
\node at (6,-4) {$\V_1$};
\node at (3, -.5) {$B$};
\node at (3, -3.5) {$\tilde B$};
\node at (-1, -2) {$\caA$};
\node at (7, -2) {$\caA^{-1}$};
\draw [->] (1,0) -- (5,0);
\draw [->] (6,-1) -- (6,-3);
\draw [->] (0,-3) -- (0,-1);
\draw [->] (1,-4) -- (5,-4);
\end{tikzpicture}
\end{center}
\caption{Lumping of $B$ with respect to $\caA$.}
\end{figure}

Then, as a consequence of Lemma
\ref{fifi},  we have that
\be\label{Bop}
(K^{\alpha,\kappa_1}_1+ K^{\alpha, \kappa_2}_2) B \tilde f (n_1,n_2)=
(K^{\alpha,\kappa_1+\kappa_2} \tilde B  f)(n_1+n_2).
\ee
As a consequence, if $B$ commutes with $(K^{\alpha,\kappa_1}_1+ K^{\alpha, \kappa_2}_2) $, then
the ``lumped operator'' $\tilde{B}$ commutes with the ``lumped symmetry''  $K^{\alpha,\kappa_1+\kappa_2}$.

This can be understood from the fact that if $B,C:\V_2\to \V_2$ are lumpable operators with
corresponding $\tilde{B}, \tilde{C}:\V_1\to\V_1$, then we can write
\[
\tilde{B}= \caA^{-1} B\caA, \quad \tilde{C} = \caA^{-1} C\caA,
\]
and observe that whenever an operator $W$  is lumpable w.r.t.\ $\caA$, we have
$\caA\caA^{-1} W \caA= W\caA$,
from which it follows that $$[\tilde B, \tilde C]= \caA^{-1}[B,C]\caA,$$ where $[B,C]=BC-CB$ denotes the commutator.\\

This can now be applied  to the process with transition operator $\Pi$, because  $P\caE$ is
a lumpable operator (where we go now from four to two variables via the map $T_\phi$ defined in \eqref{eee}, which
is the analogue of the lumping operator of Lemma \ref{fifi}).
Indeed, recall that $P\caE f(n_{1,1},n_{1,2}; n_{2,1}, n_{2,2})=\E f(\caE(X^{1,1}_{n_1,n_2})$ and,
since the distribution of $X^{1,1}_{n_1,n_2}$ depends only on $n_1, n_2$, so
does the distribution of $\caE(X^{1,1}_{n_1,n_2})$. Therefore, after taking expectations,
we are still left with  $P \caE f$ being a function of $n_1$ and $n_2$ only.

So we obtain the following
theorem. Notice that this theorem was already proved in \cite{grs} with the help of
direct somewhat tedious computations with hypergeometric functions.
\bt\label{11thm}
The generator of the immediate exchange model $IEM(1,1)$ commutes with
\[
K^{\alpha, 2}_1+ K^{\alpha, 2}_2
\]
for $\alpha\in \{+,-,0\}$.
As a consequence, the immediate exchange model is self-dual with self duality functions
\[
D(k_1,k_2;n_1,n_2)= d(k_1,n_1) d(k_2,n_2),
\]
where
\be\label{hihi}
d(k,n)=
\begin{cases}
1  &\text{if}\ k=0\\
0 &\text{if}\ k>n \\
\frac{n!}{(n-k)!}\frac{\Gamma\left(2\right)}{\Gamma\left(2+k\right)}
&\text{otherwise}\ .
\end{cases}
\ee
\et
\bpr
The commutation property comes from the commutation of $P\caE$ with
$\caK^\alpha:=K^{\alpha, 1}_{1,1}+ K^{\alpha,1}_{1,2}+ K^{\alpha, 2}_{2,1}+ K^{\alpha, 2}_{2,2}$
(cf. Lemma \ref{perlem}),  the lumpability of $P \caE$ and
the addition of the representation indices $\kappa$ (in this concrete setting $1+1=2$) for the $K$-operators
(cf. Lemma \ref{fifi}).

The fact that
these symmetries lead to the self-duality function \eqref{hihi} follows
from the general strategy for obtaining self-duality functions from symmetries
in \cite{cggr2} and \cite{gkrv}. In particular, the self-duality function \eqref{hihi} arises by
acting with the symmetry $e^{K^{+,2}_1+K^{+,2}_2}$ on the cheap self-duality function
coming from the reversible measure (which we showed in \cite{grs} to be the product of two discrete $\Gamma(2)$ distributions).
\epr
\section{Further examples and generalizations}
Before we introduce the general $IEM(s_1,t_1;s_2,t_2)$ model, we introduce a slight generalization of the
$SIP(1)$ process, namely the $SIP(s,t)$ process for  $s, t > 0$.
\bd
The $SIP(s,t)$ process is the process on $\N^2$ with generator
\beq\label{sipst}
L^{SIP(s,t)} f(n,m) \nonumber
&=& n(t+m) (f(n-1,m -1)-f(n,m))
\nonumber\\
&+& m(s+n) (f(n+1,m-1)- f(n,m)).
\eeq

\ed
In terms of the abstract generator representation \eqref{absip}, we have now that

\be\label{absipst}
L^{SIP(s,t)}= K_1^{+,s}K_2^{-,t}+ K_1^{-,s}K_2^{+,t} -2K^{0,s}_1K^{0,t}_2 + \frac{st}{2} \1_1 \1_2,
\ee
where $K^{\alpha,s}$ are defined in \eqref{genbumbi}.
As a consequence, this generator commutes with
\[
K_1^{\alpha,s}+K_2^{\alpha,t}.
\]

\subsection{Generalized immediate exchange  model $IEM(s_1,t_1;s_2,t_2)$}
Unlike the discrete ``homogeneous'' model where the wealth $n_1$ (resp.\ $n_2$) of the first (resp.\ second) agent is uniformly on ${0, \ldots, n_1}$ (resp.\ on $\{0, \ldots, n_2\}$) redistributed over
the two ``pockets'', now the redistribution is
Beta Binomial with parameters $n_1, s_1,t_1$ for the first agent (and
independent Beta Binomial with parameters $n_2, s_2,t_2$ for the second one).

We denote as before the four dimensional random variable $X^{s_1,t_1;s_2,t_2}_{n_1,n_2}$,
which corresponds to the splitting of $(n_1,n_2)$.
This splitting results from the thermalization of $SIP(s_1,t_1)$ for the first agent
and $SIP(s_2,t_2)$ for the second agent.

Moreover, if we keep the same definitions of the mappings $\caE$ and $\varphi$, then the $IEM(s_1,t_1;s_2,t_2)$ model has a one-step transition operator given by
\be\label{pigen}
\Pi f(n_1,n_2)= \E f\circ \phi\circ\caE (X^{s_1,t_1;s_2,t_2}_{n_1,n_2})= \left(T_{\phi}^{-1}P\caE T_\phi f\right) (n_1,n_2),
\ee
where, as before, $\E$ denotes expectation and $P$ is the analogue of the redistribution of mass operator
\eqref{peop} in this context.
In the notation of \eqref{genbumbi}, the operators
\be\label{cak}
\caK^{\alpha}= K^{\alpha, s_1}_{1,1}+ K^{\alpha,t_1}_{1,2} + K_{2,1}^{\alpha, s_2} + K_{2,2}^{\alpha, t_2}
\ee
are symmetries of $P$, for $\alpha \in \{+, -, 0\}$.
However, these symmetries commute with $\caE$ if and only if $s_1=s_2$, i.e.\
the parameters of the representations of $SU(1,1)$ for the sites where the exchange takes place have to be the same.
As a consequence we have the following
analogue of theorem \ref{11thm}.
\bt\label{stthm}
If $s_1=s_2=s$, the
transition operator $\Pi$ of \eqref{pigen}
commutes with the operators
\be\label{kasop}
K^{\alpha, s_1+t_1}_1+ K^{\alpha, s_2+t_2}_2.
\ee
As a consequence, $IEM(s,t_1;s,t_2)$ is self-dual with self-duality functions
\[
D(k_1,k_2; n_1,n_2)= d_{s+t_1}(k_1, n_1) d_{s+t_2} (k_2,n_2)
\]
where
\be\label{hihir}
d_r(k,n)=
\begin{cases}
1  &\text{if}\ k=0\\
0  &\text{if}\ k>n \\
\frac{n!}{(n-k)!}\frac{\Gamma\left(r\right)}{\Gamma\left(r+k\right)} &\text{otherwise}\ .
\end{cases}
\ee
\et
\bpr
Since the $K$-operators in \eqref{cak}
are symmetries of $P$, if $s_1=s_2$ they are also symmetries of
$\caE$. By lumpability of these symmetries (cf.  \eqref{Bop} and text around it) we obtain
the commutation  of the transition operator $\Pi$ with the operators \eqref{kasop}.

Concerning the reversible measure of $\Pi$,  we simply observe that the process related to the  redistribution of mass operator $P$, obtained as a thermalization of two $SIP(s, t_1)$ and $SIP(s, t_2)$ processes, admits as reversible measure  the product measure $\mu:= \nu_\lambda^s \otimes \nu_\lambda^{t_1} \otimes \nu_\lambda^s \otimes \nu_\lambda^{t_2}$ of four discrete Gamma distributions $\nu_\lambda^\beta$, for any $0 < \lambda < 1$ and suitable parameters $\beta$. We also observe that $\mu$ is permutation invariant since $s_1=s_2=s$. Moreover, we know that the image measure $\tilde \mu:= \mu \circ \varphi$ is simply the product measure of two discrete $\Gamma(s+t_1, \lambda)$ and $\Gamma(s+t_2, \lambda)$ distributions. At last, since the redistribution of mass via $P$ is made according to this ergodic measure $\mu$ and the particle conservation of  the $SIP$ process,  we have that $P= T_\varphi T_\varphi^{-1}$ given $T_\varphi^{-1}$ is $\mu$-canonical. From Propositions \ref{proposition 2} and \ref{proposition reversible}, we then conclude that $\tilde \mu$ is reversible for $\Pi$. Therefore,  we can use the cheap self-duality function associated
to this reversible measure and acting on it
with the operator $\exp({K_1^{+, s+t_1}+ K_2^{+, s+t_2}})$
as in the proof of theorem 2.1, we obtain the self-duality \eqref{hihir}.
\epr
\subsection{Models based on SEP thermalization}
We consider a different splitting mechanism in these models. Here the initial wealth of both
agents is first redistributed over two ``pockets'' having both
a maximal capacity, i.e.\ they contain a fixed number of ``slots'' in which only one ``coin'' at the time can fit. Given this ``pocket-structure'', each agent places one  coin at the time in one of the non-occupied slots, uniformly chosen among the two pockets.   Therefore, the model
has four positive integers as parameters, also referred to as capacities. We call it the ``restricted immediate exchange model'' abbreviated ``$RIEM$".

More precisely, for the first agent,  say,  $n_1$ coins are redistributed over
two pockets with maximal capacities $\gamma_1$ and $\delta_1$ according
to a hypergeometric distribution with parameters $n_1,\gamma_1,\delta_1$, i.e.\ $n_1\leq \gamma_1+\delta_1$ is split in $(k_1,n_1-k_1)$ where
$k_1$ has distribution
\be\label{hypergeo}
\pee (k_1=m)= \frac{{\gamma_1\choose m}{\delta_1\choose n_1-m}}{{\gamma_1+\delta_1\choose n_1}} 1_{m\leq \gamma_1}.
\ee

In the $RIEM(\gamma_1,\delta_1; \gamma_2,\delta_2)$
model the random variable $X^{\gamma_1,\delta_1; \gamma_2,\delta_2}_{n_1,n_2}$
describing the splitting of the initial wealth $(n_1,n_2)$ of the two agents
is $(k_1,n_1-k_1; k_2, n_2-k_2)$ where $k_1,k_2$ are independent
hypergeometric distribution with parameters $n_1,\gamma_1,\delta_1$ and $n_2,\gamma_2,\delta_2$, respectively.
We also denote as before the maps
$\caE,\phi$.

Then the $RIEM(\gamma_1,\delta_1;\gamma_2,\delta_2)$ model has a one-step transition operator given by
\be\label{pigensep}
\Pi f(n_1,n_2)= \E f\circ \phi\circ\caE (X^{\gamma_1,\delta_1;\gamma_2,\delta_2}_{n_1,n_2})= \left(T_{\phi}^{-1}P\caE T_\phi f\right) (n_1,n_2),
\ee
where as before $\E$ denotes expectation and $P$ denotes the analogue of mass redistribution operator
\eqref{peop} in this context.
In analogous notation of \eqref{genbumbi}, the operators
\be\label{caj}
\caJ^{\alpha}= J^{\alpha, \gamma_1}_{1,1}+ J^{\alpha,\delta_1}_{1,2} + J_{2,1}^{\alpha, \gamma_2} +
J_{2,2}^{\alpha, \delta_2}
\ee
are symmetries of $P$ for $\alpha \in \{+,-,0\}$.
Here the $J^{\alpha,\gamma}$ are the operators working on functions
$f: \{0,\ldots,\gamma\}\to\R$ defined via
\beq\label{su2gen}
J^{+,\gamma} f(n) &=& (\gamma-n) f(n-1)\nonumber\\
J^{-,\gamma} f(n) &=& n f(n-1)\nonumber\\
J^{0,\gamma} f(n) &=& \left(\tfrac{\gamma}{2}-n\right) f(n)\ ,
\eeq
generators of a (left) representation of the $SU(2)$ algebra.

These symmetries commute with the exchange operator $\caE$ if and only if $\gamma_1=\gamma_2$, i.e.\
the parameters of the representations of $SU(2)$ for the sites where the exchange takes place have to be the same.
Moreover, these $J$-operators have the same additive structure of the $K$-operators considered above, i.e.\
$(J^{\alpha, \gamma_1}_{1,1}+ J^{\alpha,\delta_1}_{1,2} )f(n_{1,1}+n_{1,2})= (J_1^{+,\gamma_1+\delta_1} f) (n_1)$,
where $n_1=n_{1,1}+n_{1,2}$.

As a consequence, we obtain the following
analogue of Theorem \ref{stthm}. The proof, which is
a copy of the proof of the $SU(1,1)$ case, replacing $K$-operators
by $J$-operators, is left to the reader.
\bt\label{gdthm}
If $\gamma_1=\gamma_2=\gamma$, the
transition operator \eqref{pigensep}
commutes with the operators
\[
J^{\alpha, \gamma_1+\delta_1}_1+ J^{\alpha, \gamma_2+\delta_2}_2.
\]
As a consequence, $RIEM(\gamma,\delta_1;\gamma,\delta_2)$ is self-dual with self-duality functions
\[
D(k_1,k_2; n_1,n_2)= d_{\gamma+\delta_1}(k_1, n_1) d_{\gamma+\delta_2} (k_2,n_2)
\]
where
\be\label{hihirr}
d_r(k,n)=
\begin{cases}
1  &\text{if}\ k=0\\
0 &\text{if}\ k>n\ \text{or}\ n>r \\
\frac{{n\choose k}}{{r\choose k}} &\text{otherwise}\ .
\end{cases}
\ee
where ${a\choose b}$ is defined to be zero if $a<b$.
\et
\subsection{Model based on independent random walks}
In this model, the splitting process is the thermalization of
independent symmetric walkers, with Poisson distributions as invariant measures.
To describe the model, for each $n \in \N$
we denote  by $X_n$ a random variable
with distribution
\be\label{poiss}
\pee( X_n=k) = \frac{\frac{1}{k!}\frac{1}{(n-k)!}}{Z_n}= {n\choose k} \frac{1}{2^n},
\ee
where $Z_n$ is a normalizing constant.
This distribution is the first marginal of two independent Poisson random variables (with same parameter) conditioned on their sum to be equal
to $n$, which is the binomial $n$ with success probability $1/2$. Because independent random walkers have Poisson distributions as their invariant measures, this distribution can be obtained
from thermalizing independent random walkers.

More precisely, defining the process on $\N^2$ with generator
\beq\label{indrwgen}
L^{RW} f(n,m) \nonumber &=& n(f(n-1, m+1)-f(n,m))
\nonumber\\
&+& m (f(n+1,m-1)-f(n,m)),
\eeq
we have that
\be\label{indtherm}
\lim_{t\to\infty} \E_{n,m}f(n(t), m(t))= f(X_{n+m}, n+m-X_{n+m})
\ee
where $X_{\cdot}$ is the random variable defined via \eqref{poiss}.

We can then define the independent random walk redistribution model as follows:
\ben [label={(\roman*)}]
\item Start from two agents with initial wealth $n_1,n_2\in \N$.
\item Split the wealth into four components (two pockets for each agent)
$n_{1,1}= X_{n_1}, n_{1,2}= n_1- X_{n_1}$, $n_{2,1}= X_{n_2}, n_{2,2}= n_2-X_{n_2}$ where
$X_{n_1}, X_{n_2}$ are independent random variables of  distribution \eqref{poiss}.
\item Exchange the wealth $n_{1,1}$ and $n_{2,1}$, i.e.\
obtain the exchanged four tuple $(X_{n_2}, n_1- X_{n_1}, X_{n_1}, n_2-X_{n_2})$.
\item Add the wealth of the two pockets again to obtain the
final new wealths of the two agents: $(X_{n_2}+ n_1- X_{n_1}, X_{n_1}+ n_2-X_{n_2})$.
\een
This procedure gives then
the one-step transition operator
\be\label{irwtrans}
\Pi f(n_1,n_2)= \E f(X_{n_2}+ n_1- X_{n_1}, X_{n_1}+ n_2-X_{n_2}),
\ee
where the expectation $\E$ is w.r.t.\ the two independent random variables $X_{n_1}, X_{n_2}$.
We then have the following self-duality result.
\bt\label{indrwthm}
The process with transition operator $\Pi$ given by \eqref{irwtrans} is self-dual
with self-duality function
\be\label{bombio}
D(k_1,k_2;n_1,n_2)= \frac{n_1! n_2!}{(n_1-k_1)! (n_2-k_2)!}.
\ee
\et
\bpr
As before, we first look at the symmetries of the four-variable transition operator
\[
P: \V_4\to \V_4
\]
defined via
\be\label{bababo}
P f(n_{1,1}, n_{1,2}, n_{2,1}, n_{2,2}):= \E f( X_{n_1}, n_1-X_{n_1}, X_{n_2}, n_2- X_{n_2}),
\ee
where $\E$ is w.r.t.\ the two independent random variables $X_{n_1}, X_{n_2}$ and $n_1= n_{1,1}+ n_{1,2}, n_2= n_{2,1}+ n_{2,2}$.
To prove the theorem, we show that
\ben [label={(\arabic*)}]
\item $P$ commutes with  the two symmetries
\[
S^\dagger= a^\dagger_{1,1}+a^\dagger_{1,2}+ a^\dagger_{2,1}+ a^\dagger_{2,2},\quad S^-=  a_{1,1}+a_{1,2}+ a_{2,1}+ a_{2,2},
\]
where
\be\label{adag}
a^\dagger f(n)= f(n+1), \quad a f(n)= n f(n-1).
\ee
This follows from the thermalization procedure \eqref{indtherm} and the fact that
the generator $L^{RW}$ of \eqref{indrwgen} commutes with   $a^\dagger_1+a^\dagger_2$ and $a_1+a_2$ (this  follows from its representation
\[
L^{RW}= -(a_1-a_2) (a_1^\dagger-a_2^\dagger)
\]
and the commutation relations $[a_i^\dagger,a_j]=\delta_{i,j} \1$, where $\1$ is the identity).
\item The symmetries $S^\dagger$ and $S^-$ commute with the exchange operator. This follows as in Lemma \ref{perlem} from the fact that the symmetries are
sums of copies of the same operator working on different variables.
\item On functions of the form $ \tilde f(n_{1,1}, n_{1,2}, n_{2,1}, n_{2,2})= f (n_1,n_2)$, the symmetries
act as
\beq
S^\dagger \tilde f(n_{1,1}, n_{1,2}, n_{2,1}, n_{2,2}) &=& (\caS^\dagger f )(n_1,n_2)\nonumber\\
S^- \tilde f(n_{1,1}, n_{1,2}, n_{2,1}, n_{2,2}) &=& (\caS^- f )(n_1,n_2)\ ,
\eeq
where
\[
\caS^\dagger:=  a_1^\dagger+ a_2^\dagger,\quad  S^-:= a_1+ a_2.
\]
This is an immediate consequence of the form \eqref{adag} of the operators $a^\dagger, a$.
\een
The result then follows because the duality function \eqref{bombio} is obtained from acting with the symmetry $e^{a_1^\dagger+a_2^\dagger}$ on the cheap self-duality function
\[
D_{cheap} (k_1,k_2;n_1,n_2)= k_1! k_2! \delta_{k_1,n_1}\delta_{k_2,n_2},
\]
which is recovered, as in the proof of Theorem \ref{stthm}, by checking conditions in Propositions \ref{proposition reversible} and \ref{proposition 2}
for the reversible product measure of Poisson distributions for $P$.
\epr

\subsection{Model based on asymmetric random walks}
A generalization of the previous model is obtained when we consider (possibly) asymmetric walkers for the thermalization.
On a two vertex system, consider the generator
\beq\label{lalpha}
L^{RW(q)}&=& q (a_2^\dagger a_1- a_2 a_2^\dagger) + (a_1^\dagger a_2 - a_1 a_1^\dagger) \nonumber\\
&=& - (a_1 - a_2) (a_1^\dagger - q a_2^\dagger)
\eeq
working on functions $f\in \V_2$ and with $q > 0$. This represents independent random walkers jumping with rate $q$ for jumps from site $1$ to site $2$ and
rate one for jumps from $2$ to $1$.
The reversible measure is given by the product
\be\label{prodpois}
\rho_\lambda\otimes\rho_{q \lambda},
\ee
where $\rho_\lambda$ is the Poisson distribution
with parameter $\lambda > 0$.
A commuting operator is given by $S^\dagger=a_1^\dagger+ a_2^\dagger$, i.e.\ $[L^{RW(q)},S^\dagger]=0$.

Let us call $(X,Y)$ a pair of independent random variables jointly distributed as \eqref{prodpois}; then, conditional on $X+Y=n$,
$X \sim \text{Bin}(n, 1/(1+q))$  and $Y=n-X$.
Therefore, following the setup of the previous models,  for $n_1,n_2\in\N$, $q_1, q_2>0$, we define the four-dimensional random variable
$X^{q_1,q_2}_{n_1,n_2}$ responsible for the redistribution of mass as $(n_{1,1}, n_{1,2}; n_{2,1},n_{2,2})$, where $n_{1,1}$ and $n_{2,1}$ are independent
$\text{Bin}(n_1, 1/(1+q_1))$ and  $\text{Bin}(n_2, 1/(1+q_2))$, and $n_{1,2}= n_1-n_{1,1}, n_{2,2}= n_2-n_{2,1}$.

Denoting as before $\caE$ the exchange map, we can then define the transition operator $\Pi$ of the ``Poissonian'' immediate exchange model
$PIEM(q_1, q_2)$ via
\be\label{pipiem}
\Pi f(n_1,n_2)= \E \left(f(n_{2,1}+n_{1,2}, n_{1,1}+ n_{2,2})\right).
\ee

To uncover the relevant symmetries of $\Pi$, we note that $\Pi= T_\varphi^{-1}P\caE T_\varphi$, where $P$ is as before the redistribution operator
\[
Pf (n_{1,1}, n_{1,2}; n_{2,1}, n_{2,2})= \E f(X^{q_1, q_2}_{n_{1,1}+n_{1,2}, n_{2,1}+n_{2,2}}).
\]
A symmetry of $P$ is given by $$\caS^\dagger:= a_{1,1}^\dagger+a_{1,2}^\dagger+a_{2,1}^\dagger+a_{2,2}^\dagger.$$
This symmetry also commutes with $\caE$ and, therefore,
it commutes with $P\caE$. Moreover, $\caS^\dagger$ is lumpable w.r.t.\ $T_\varphi$, and the lumped symmetry is $S^\dagger:= 2 (a_1^\dagger+a_2^\dagger )$, as it can be seen from the simple identity:
\[
(a_{1,1}^\dagger+a_{1,2}^\dagger+a_{2,1}^\dagger+a_{2,2}^\dagger) f(n_{1,1}+n_{1,2}, n_{2,1}+n_{2,2})= 2 (a_1^\dagger + a_2^\dagger) f (n_1, n_2),
\]
with $n_1= n_{1,1}+n_{1,2}, n_2=n_{2,1}+n_{2,2}$.
Therefore, the lumped operator $\Pi= T_\varphi^{-1} P\caE T_\varphi$ commutes with $S^\dagger=2(a_1^\dagger+ a_2^\dagger)$.
To apply this symmetry in order to produce a useful self-duality function we have to start, as usual,  from the cheap self-duality function corresponding
to a reversible measure for the process with transition operator $\Pi$.
This is provided by the following lemma.
\bl
For all $\lambda>0$, the Poisson product measure
\be\label{rev}
\rho_{\lambda(1+q_1)}\otimes \rho_{\lambda(1+q_2)}
\ee
is reversible for the transition operator $\Pi$.
\el
\bpr
First we remark that the product measure $\mu:= \rho_\lambda\otimes \rho_{\lambda q_1}\otimes \rho_{\lambda'}\otimes\rho_{\lambda' q_2}$
is reversible for the redistribution operator $P$. Therefore, if additionally $\lambda=\lambda'$, it is reversible for
the operator $P\caE$. Moreover, by choosing the $\mu$-canonical $T_\varphi^{-1}$, condition \eqref{aaa} holds and hence, by Proposition \ref{proposition reversible}, the measure \eqref{rev} is obtained as image measure $\mu \circ \varphi$.
\epr

The self-duality result is immediately derived by acting with $e^{\frac12 S^\dagger}$ on the cheap self-duality function associated to the reversible measure for $\Pi$ in \eqref{rev}, after setting the parameter $\lambda$ equal to $1$.
\begin{theorem}
The process with transition operator $\Pi$ in \eqref{pipiem}, associated to two systems of $q_1$ and $q_2$-asymmetric random walks as above, is self-dual with self-duality function
\be
D_{q_1, q_2}(k_1, k_2; n_1, n_2):= d_{q_1}(k_1, n_1) d_{q_2}(k_2, n_2),
\ee
where
\begin{align*}
d_{q}(k,n):= \begin{cases}
0 &\text{if } k > n\\
\frac{n!}{(n-k)!}\frac{1}{e^{-(1+q)}  (1+q)^{n}} &\text{if }  k \leq n\ .
\end{cases}
\end{align*}
\end{theorem}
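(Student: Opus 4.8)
The plan is to run, without modification, the general scheme already deployed for Theorems \ref{stthm} and \ref{indrwthm}: every structural ingredient has been assembled just above the statement, so the only work left is to produce the nontrivial self-duality function by letting the exponentiated symmetry act on the cheap self-duality function attached to the reversible measure. Concretely, I may assume that the transition operator $\Pi$ of \eqref{pipiem} commutes with $S^\dagger=2(a_1^\dagger+a_2^\dagger)$ (hence so does $e^{\frac12 S^\dagger}=e^{a_1^\dagger+a_2^\dagger}$), and that the Poisson product measure $\tilde\mu:=\rho_{\lambda(1+q_1)}\otimes\rho_{\lambda(1+q_2)}$ of \eqref{rev} is reversible for $\Pi$. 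Reversibility of $\Pi$ with respect to $\tilde\mu$ means $\Pi^\ast=\Pi$ in $L^2(\tilde\mu)$, which, by the general principle of \cite{cggr2,gkrv}, is precisely self-duality with respect to the cheap function
\[
D_{cheap}(k_1,k_2;n_1,n_2)=\frac{\delta_{k_1,n_1}\,\delta_{k_2,n_2}}{\rho_{1+q_1}(n_1)\,\rho_{1+q_2}(n_2)},\qquad \rho_{1+q}(n)=e^{-(1+q)}\frac{(1+q)^n}{n!},
\]
after fixing $\lambda=1$; note that $D_{cheap}$ already carries exactly the normalization $e^{-(1+q)}(1+q)^n$ that reappears in the target $d_q$.

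Next I would transfer this cheap self-duality along the symmetry. Since $e^{\frac12 S^\dagger}$ commutes with $\Pi$, the symmetry-to-duality mechanism of \cite{cggr2,gkrv} guarantees that applying $e^{\frac12 S^\dagger}$ to the dual variables $(k_1,k_2)$ of $D_{cheap}$ again yields a self-duality function for $\Pi$. Because $e^{\frac12 S^\dagger}=e^{a_1^\dagger}\,e^{a_2^\dagger}$ factorizes over the two agents, it suffices to compute the one-variable action. Using $a^\dagger f(k)=f(k+1)$ from \eqref{adag}, so that $e^{a^\dagger}f(k)=\sum_{j\ge 0}\tfrac{1}{j!}f(k+j)$, one finds
\[
\bigl(e^{a^\dagger}\bigr)_{(k)}\frac{n!\,\delta_{k,n}}{e^{-(1+q)}(1+q)^n}=\sum_{j\ge 0}\frac{1}{j!}\,\frac{n!\,\delta_{k+j,n}}{e^{-(1+q)}(1+q)^n}=\frac{n!}{(n-k)!}\,\frac{1}{e^{-(1+q)}(1+q)^n},
\]
where the Kronecker delta collapses the series onto the single surviving term $j=n-k$ when $k\le n$, and leaves $0$ when $k>n$. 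This is exactly $d_q(k,n)$, and the product structure of both $D_{cheap}$ and the factorized symmetry then delivers $D_{q_1,q_2}=d_{q_1}(k_1,n_1)\,d_{q_2}(k_2,n_2)$.

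Since every structural step is inherited from the preceding constructions, there is no deep obstacle here; the genuinely load-bearing fact, namely reversibility obtained via $P=T_\varphi T_\varphi^{-1}$, is already packaged in the lemma proving \eqref{rev}. The residual difficulty is therefore purely a matter of bookkeeping: one must write down the cheap self-duality function in its correct (unnormalized-by-$\mu$) form, decide that the creation symmetry acts on the dual coordinates $(k_1,k_2)$ rather than on $(n_1,n_2)$ — the latter choice merely produces the transpose function $\tfrac{k!}{(k-n)!}\cdots$ — and verify that the constants $e^{-(1+q)}(1+q)^n$ pass through the computation intact. The single point deserving care is the invocation that acting with a symmetry on one variable of a self-duality function preserves self-duality; this is the content of \cite{cggr2,gkrv} and requires only that $e^{\frac12 S^\dagger}$ commute with $\Pi$, which has already been established.
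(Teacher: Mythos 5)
Your proposal is correct and follows essentially the same route as the paper, which derives the theorem precisely by acting with $e^{\frac12 S^\dagger}=e^{a_1^\dagger+a_2^\dagger}$ on the cheap self-duality function attached to the reversible measure $\rho_{1+q_1}\otimes\rho_{1+q_2}$ of \eqref{rev} (i.e.\ \eqref{rev} with $\lambda=1$), relying on the already-established commutation $[\Pi, S^\dagger]=0$ and the reversibility lemma. Your explicit one-variable computation collapsing the series $\sum_{j\ge 0}\tfrac{1}{j!}\,\delta_{k+j,n}$ onto $j=n-k$ is exactly the bookkeeping the paper leaves implicit, and it correctly reproduces $d_q(k,n)$.
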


%
%
%
%
%


\begin{thebibliography}{ccc}
\bibitem{chakra} B.K.\ Chakrabarti, A.\ Chakraborti, S.R.\ Chakravarty, A.\ Chatterjee (2013). Econophysics of Income and Wealth Distributions. Cambridge University Press, Cambridge.

\bibitem{Heipat} E.\ Heinsalu, M.\ Patriarca (2014). Kinetic models of immediate exchange. European Physical Journal B 87: 170.
\bibitem{pipo} G.\ Katriel (2014). The Immediate Exchange model: an analytical investigation. Preprint at \url{http://arxiv.org/abs/1409.6646}.
\bibitem{kmp} C.\ Kipnis, C.\ Marchioro, E.\ Presutti (1982). Heat flow in an exactly solvable model.  Journal of Statistical Physics 27, 65-74.


\bibitem{cggr} G.\ Carinci, C.\ Giardin\`a, C.\ Giberti, F.\ Redig (2013) Duality for stochastic models of transport. Journal of Statistical Physics 152, 657-697.
\bibitem{cggr2}
G.\ Carinci, C.\ Giardin\`{a}, C.\ Giberti, F.\ Redig,
Dualities in population genetics: A fresh look with new dualities,
Stochastic Processes and their Applications 125 (3), 941-969
\bibitem{gkrv}
C.\ Giardin\`{a}, J.\ Kurchan, F.\ Redig, K.\ Vafayi,
Duality and hidden symmetries in interacting particle systems,
Journal of Statistical Physics 135 (1), 25-55
\bibitem{grv}
C.\ Giardin\`{a}, F.\ Redig, K.\ Vafayi,
Correlation inequalities for interacting particle systems with duality,
Journal of Statistical Physics 141 (2), 242-263

\bibitem{crr} P.\ Cirillo, F.\ Redig, W.\ Ruszel (2014). Duality and stationary distributions of wealth distribution models. Journal of Physics A  47, 085203.

\bibitem{grs}
B.\ van Ginkel, Bart, F.\ Redig, F.\  Sau,  Duality and Stationary Distributions of the "Immediate Exchange Model'' and Its Generalizations. Journal of
 Statistical Physics  163  (2016), 92–112.
\end{thebibliography}
\end{document}